\def\cocoa{{\hbox{\rm C\kern-.13em o\kern-.07em C\kern-.13em o\kern-.15em A}}}
\newtheorem{theorem}{Theorem}[section]
 \newtheorem{corollary}[theorem]{Corollary}
 \newtheorem{lemma}[theorem]{Lemma}
 \newtheorem{conjecture}[theorem]{Conjecture}
 \newtheorem{proposition}[theorem]{Proposition}
 \newtheorem{question}[theorem]{Question}
 \theoremstyle{definition}
 \theoremstyle{remark}
 \newtheorem{remark}[theorem]{Remark}
  \newtheorem{example}[theorem]{Example}
  \definecolor{MyDarkGreen}{cmyk}{0.7,0,1,0}
\author[J.~Migliore]{Juan Migliore}
\address{Department of Mathematics, University of Notre Dame, Notre Dame, IN 46556}
\email{migliore.1@nd.edu}
\author[F.\ Zanello]{Fabrizio Zanello}
\address{Department of Mathematical Sciences, Michigan Tech, Houghton, MI 49931}
\email{zanello@mtu.edu}
\title[Unimodal Gorenstein $h$-vectors without the Stanley-Iarrobino property]{Unimodal Gorenstein $h$-vectors without the Stanley-Iarrobino property}
\begin{document}

\begin{abstract} 
The study of the $h$-vectors of graded Gorenstein algebras is an important topic in combinatorial commutative algebra, which despite the large amount of literature produced during the last several years, still presents many interesting open questions. In this note, we commence a study of those unimodal Gorenstein $h$-vectors that do \emph{not} satisfy the Stanley-Iarrobino property. Our main results, which are characteristic free, show that such $h$-vectors exist: 1) In socle degree $e$ if and only if $e\ge 6$; and 2) In every codimension five or greater. The main case that remains open is that of codimension four, where no Gorenstein $h$-vector is known without the Stanley-Iarrobino property.\\
We conclude by proposing the following very general conjecture: The existence of any arbitrary level $h$-vector is \emph{independent} of the characteristic of the base field.
\end{abstract}

\keywords{Gorenstein $h$-vector, SI-sequence; unimodality; Hilbert function; level $h$-vector; Macaulay's theorem; $O$-sequence; artinian algebra}
\subjclass[2010]{Primary: 13D40; Secondary: 13H10, 13E10, 05E40}

\maketitle


\section{Introduction}

The goal of this note is to continue an investigation into the structure of the $h$-vectors of Gorenstein artinian algebras. A large amount of work has been done over the years to study the properties of Gorenstein rings, both for the substantial interest of the topic within commutative algebra, and for the many connections that it carries with other mathematical disciplines, such as combinatorics and algebraic geometry (see, as a highly nonexhaustive list, \cite{A,bass,Hu,P,St-80,St0,St2}). However, much is still unknown today on which integer sequences $h=(1,h_1,\dots, h_{e-1},h_e=1)$  may arise as Gorenstein $h$-vectors, if the codimension $r=h_1$ is greater than 3.

Recall that a sequence $h=(1,h_1,\dots,h_e)$ is \emph{symmetric} if $h_i=h_{e-i}$, for all indices $i$. We say that $h$ is \emph{differentiable} if its \emph{first difference}, $(1,h_1-1,\dots,h_e-h_{e-1})$, is the $h$-vector of some graded artinian algebra (see the next section for the relevant definitions and for Macaulay's characterization of such $h$-vectors). Finally, $h$ satisfies the \emph{Stanley-Iarrobino property}, or more briefly, is an \emph{SI-sequence}, if $h$ is symmetric and its \emph{first half}, $(1,h_1,\dots,h_{\lfloor e/2\rfloor})$, is differentiable. Note that an SI-sequence is always \emph{unimodal}, i.e., it never increases strictly after a strict decrease. For the importance of the concept of unimodality in algebra, combinatorics, and related areas, see the two classical survey articles \cite{Bre,Stan}.

It is well known that all Gorenstein  $h$-vectors are symmetric. It is also known that any SI-sequence is a Gorenstein $h$-vector \cite{CI,ha,MN}, while the converse has been shown to hold in codimension $r\le 3$ (if $r\le2$ the result is trivial; for $r=3$, see Stanley \cite{stanley} and then the second author \cite{Za3}). However, as proven by Stanley and subsequently other authors, there even exist Gorenstein $h$-vectors that are not {unimodal}, in any codimension $r\ge 5$ (see, just as a sample, \cite{reniar, Bo, BL, MNZ1, MNZ2, MZ}). This fact alone leads most experts to believe that a complete characterization of the possible Gorenstein $h$-vectors is probably hopeless.

In this note, we investigate Gorenstein  $h$-vectors  under a different perspective, by beginning a study of those that are unimodal but do \emph{not} satisfy the Stanley-Iarrobino property. This problem appears to have not yet been addressed in the literature, with the exception of a brief section in the second author's thesis \cite{Za0} (not published elsewhere). Our main results, which are all characteristic free, show that there exist unimodal, non-SI Gorenstein $h$-vectors of socle degree $e$ if and only if $e\ge 6$; and that there exist unimodal, non-SI Gorenstein $h$-vectors in every codimension $r\ge 5$. The main case that remains open is that of codimension $r=4$, where all Gorenstein $h$-vectors currently known are SI-sequences.

We wrap up our paper by conjecturing an intriguing and general fact on arbitrary level $h$-vectors: Namely, their existence is \emph{independent} of the characteristic of the base field.

\section{Notation and preliminary facts}

We  now recall a few standard definitions and preliminary facts. Let $R = k[x_1,\dots,x_r]$, where $k$ is any infinite field, and let  $\mathfrak{m} = (x_1, \dots ,x_r)$ be its maximal homogeneous ideal. We consider  standard graded artinian $k$-algebras $A=R/I=\bigoplus _{i\ge 0}[A]_{i}$ where $I$ is a homogeneous ideal of $R$. Since we may assume, without loss of generality, that $I$ does not contain nonzero forms of degree 1,  $r$ is the \emph{codimension} of $A$.

The \emph{Hilbert function} of $A$ is defined by $h_A(i)=\dim _k [A]_{i}$, for all $i\ge 0$. Notice that the Hilbert function of an artinian $k$-algebra has finite support and is captured in its {\em $h$-vector} $h = (1,h_1,\dots,h_e)$, where $e$ is the last index such that $h_{e}=h_A(e)>0$. The integer $e$ is referred to as the {\em socle degree} of $A$.

The \emph{socle}  of $A$ is the annihilator of $\overline{\mathfrak{m}} = (\overline{x_1}, \dots ,\overline{x_r})\subset A$; that is, $soc(A)=\{ a\in A \mid a\overline{\mathfrak{m}}=0 \}$. We say that $A$ is {\em level of type $t$} if the socle of $A$ is a $t$-dimensional $k$-vector space and is all concentrated in degree $e$. Finally, $A$ is {\em Gorenstein} if $A$ is level of type 1.

The following classical result of Macaulay characterizes the  integer sequences that arise as $h$-vectors of standard graded artinian algebras. For $n$ and $i$  positive integers, define the \emph{$i$-binomial expansion of $n$} as
$$n_{(i)} = \binom{n_i}{ i}+\binom{n_{i-1}}{ i-1}+...+\binom{n_j}{ j},$$
where $n_i>n_{i-1}>...>n_j\geq j\geq 1$. Such an expansion always exists and is unique (see, e.g., \cite{BH}, Lemma 4.2.6). Then let
$$ n^{<i>}=\binom{n_i+1}{ i+1}+\binom{n_{i-1}+1}{ i-1+1}+...+\binom{n_j+1}{ j+1}.$$

\begin{theorem}[Macaulay]\label{macaulay} A sequence $h=(1,h_1,\dots,h_e)$ of positive integers is the $h$-vector of some standard graded artinian algebra if and only if, for all indices $i$, 
$$h_{i+1}\leq h_i^{<i>}.$$
\end{theorem}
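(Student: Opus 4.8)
The plan is to prove both implications by reducing to the combinatorial study of \emph{lex-segment ideals}: the key points are that taking an initial ideal with respect to any term order preserves the Hilbert function, and that among all monomial ideals with a prescribed Hilbert function the lex ideal grows as fast as possible into the next degree. Concretely, write $S_i$ for the set of degree-$i$ monomials of $R$, and for $M\subseteq S_i$ let $\partial M=\{x_j m : 1\le j\le r,\ m\in M\}\subseteq S_{i+1}$ be its \emph{shadow}. The whole theorem rests on one combinatorial lemma: if $|M|=n$, then $|\partial M|\ge n^{<i>}$, with equality when $M$ is the lex-segment of size $n$ in $S_i$ (equivalently, the shadow of that lex-segment is precisely the lex-segment of size $n^{<i>}$ in $S_{i+1}$).

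First I would establish this lemma, by induction on $r$ and on $n$. Writing each monomial of $M$ in the form $x_r^j\cdot m'$ with $m'\in k[x_1,\dots,x_{r-1}]$ partitions $M$ into ``slices'' $M=\bigsqcup_{j\ge 0}x_r^j M_j$, and $\partial M$ decomposes compatibly, which lets one compare $|\partial M|$ slice by slice. The cleanest route is a \emph{compression} argument: replace each $M_j$ by the lex-initial segment of the same size; a short exchange argument shows this does not increase $|\partial M|$, and after all slices are compressed the remaining statement is a direct, if somewhat delicate, identity with binomial coefficients, obtained by feeding the $i$-binomial expansion of $n$ through the definitions of $\partial$ and of $n^{<i>}$.

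Granting the lemma, \textbf{necessity} follows from the standard Gröbner reduction. Given $A=R/I$ with $h$-vector $h$, pass to the initial ideal $\operatorname{in}(I)$ with respect to a term order (e.g.\ after a generic change of coordinates, so that $\operatorname{in}(I)$ is Borel-fixed): $R/\operatorname{in}(I)$ has the same $h$-vector $h$, and since $\operatorname{in}(I)$ is an ideal we have $[\operatorname{in}(I)]_{i+1}\supseteq\partial\big([\operatorname{in}(I)]_i\big)$. Passing to complements in $S_i,S_{i+1}$ and applying the lemma to the monomial basis of $[R/\operatorname{in}(I)]_i$ gives $h_{i+1}\le h_i^{<i>}$. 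For \textbf{sufficiency}, assume $h_{i+1}\le h_i^{<i>}$ for all $i$ and build the lex ideal $L$ degree by degree, letting $[L]_i$ be the last $\binom{r+i-1}{i}-h_i$ monomials of $S_i$ in the lex order; the only thing to check is $\mathfrak m\cdot[L]_i\subseteq[L]_{i+1}$. But the complement $W_i:=S_i\setminus[L]_i$ is the lex-segment of size $h_i$, so by the lemma $|\partial W_i|=h_i^{<i>}\ge h_{i+1}$, and since the shadow of a lex-segment is again a lex-segment, $\partial W_i$ is contained in the lex-segment of size $h_{i+1}$; hence $[L]_{i+1}$ is closed under multiplication by $\mathfrak m$ applied to $[L]_i$. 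Thus $R/L$ is a genuine standard graded artinian algebra with $h$-vector $h$.

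The main obstacle is the combinatorial lemma, and within it the assertion that lex-segments \emph{minimize} the shadow: the compression step requires a careful exchange argument, and the final reduction to the identity $|\partial(\text{lex-segment of size }n)|=n^{<i>}$ involves nontrivial binomial bookkeeping. Some care is also needed in the necessity direction to take the (generic) initial ideal correctly over an infinite field, so that the Hilbert function is genuinely preserved. Once the lemma is in hand both implications are immediate. As all of this is entirely classical, in practice one simply cites \cite{BH}, Theorem~4.2.10, or Macaulay's original work.
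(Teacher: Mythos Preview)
The paper does not prove Macaulay's theorem; it is stated as a classical fact, with \cite{BH} cited nearby. So there is nothing to compare your proposal to, and indeed your closing remark (that in practice one simply cites \cite{BH}, Theorem~4.2.10) is exactly what the paper does.

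That said, the outline you give has a genuine gap: the key combinatorial lemma is misstated. You assert that for $M\subseteq S_i$ with $|M|=n$ one has $|\partial M|\ge n^{<i>}$, with equality when $M$ is a lex segment. But $|\partial M|$ depends on the number of variables $r$, while $n^{<i>}$ does not; already $|\partial\{x_1^i\}|=r$, whereas $1^{<i>}=1$, so equality fails for every $r\ge 2$. The correct Macaulay lemma lives on the \emph{ideal} side: if $L_i\subseteq S_i$ satisfies $|S_i\setminus L_i|=h_i$, then $|S_{i+1}\setminus\partial L_i|\le h_i^{<i>}$, with equality when $L_i$ is the lex-final segment. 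Equivalently, on the quotient side one must use the operation $W\mapsto\{n\in S_{i+1}:\text{every degree-}i\text{ divisor of }n\text{ lies in }W\}$, not $\partial W$. Because your lemma is off, both deductions break: for necessity, a \emph{lower} bound on $|\partial(\hbox{quotient basis})|$ cannot produce the \emph{upper} bound $h_{i+1}\le h_i^{<i>}$; for sufficiency, ``$\partial W_i$ is contained in the lex segment of size $h_{i+1}$'' is backwards, and even the corrected containment $W_{i+1}\subseteq\partial W_i$ does not imply $\mathfrak m\cdot[L]_i\subseteq[L]_{i+1}$ (what is needed is that every degree-$i$ divisor of each $n\in W_{i+1}$ lies in $W_i$). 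The compression strategy you sketch is exactly how one proves the \emph{correct} lemma, so the plan is salvageable once the lemma is reformulated.
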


The sequences that satisfy Theorem \ref{macaulay} are usually referred to as \emph{$O$-sequences}. 

We now briefly recall a useful tool in the study of artinian $h$-vectors, namely \emph{Macaulay's inverse systems}. Given a  homogeneous ideal $I\subset R=k[x_1,\dots,x_r]$, define its \emph{inverse system} to be the graded $R$-submodule $M\subset
S=k[y_1,\dots,y_r]$, where $R$ acts on $S$ by contraction and $I=\operatorname{ann}(M)$. The external product that defines the module $S$ consists of the linear action uniquely determined by
$$ x_i \circ y_1^{a_1}y_2^{a_2}\cdots y_r^{a_r} =
  \begin{cases}
    y_1^{a_1}y_2^{a_2}\cdots y_i^{a_i-1}\cdots y_r^{a_r},&\text{if $a_i>0$,}\\
    0,&\text{if $a_i=0$,}
  \end{cases}
$$
for any nonnegative integers $a_1,a_2,\dots,a_r$. We may sometimes simplify the notation by considering inverse system submodules to also lie inside the  polynomial ring $R$. 

We refer the reader to \cite{ger,IKa} for more information and a comprehensive treatment of the theory of inverse systems. In particular, we note that when the characteristic of $k$ is zero, then one can use \emph{differentiation} in place of the above linear action, so that the $x_i$ act as partial derivatives. In general, if $R/I$ is artinian and  level of type $t$ and socle degree $e$, then its corresponding module $M \subset S$ is generated by $t$ linearly independent homogeneous forms of degree $e$. Also important for us below,  the Hilbert functions of $R/I$ and  $M$ coincide; that is, $\dim_k [R/I]_i = \dim_k [M]_i$, for all $i$.

We end this section with some constructions that will be crucial to our main results.

The first of these constructions, called \emph{trivial extensions},  is due to Stanley (\cite{stanley}, Example 4.3; see also \cite{Re}). Nearly four decades later, this simple idea remains the single most powerful tool that people have for constructing new Gorenstein $h$-vectors, in particular nonunimodal ones.

\begin{lemma}[Stanley]\label{te}
Given a level algebra with $h$-vector $(1,h_1,\dots,h_{e-1})$, there exists a Gorenstein algebra with $h$-vector $(1,H_1,\dots,H_{e-1},H_e=1)$, where for each $i=1,\dots,e-1$,
$$H_i=h_i+h_{e-i}.$$
\end{lemma}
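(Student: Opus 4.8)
The plan is to argue with Macaulay's inverse systems. Write the given level algebra as $A = R/I$ with $R = k[x_1,\dots,x_r]$. Since a level algebra of socle degree $e-1$ and type $t$ necessarily has $h_1 = r$ and $h_{e-1} = t$, the target $h$-vector has $H_1 = r+t$, so I would look for the desired Gorenstein algebra inside the enlarged ring $R' = R[z_1,\dots,z_t]$, adjoining $t$ new variables. By the theory recalled above, $I = \operatorname{ann}(M)$ for an inverse system module $M = \langle F_1,\dots,F_t\rangle \subseteq S = k[y_1,\dots,y_r]$ generated by $t$ linearly independent forms of degree $e-1$, and $\dim_k [A]_i = \dim_k [M]_i$ for all $i$. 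Adjoining dual variables, set $S' = k[y_1,\dots,y_r,w_1,\dots,w_t]$ and
$$ G \;=\; \sum_{i=1}^{t} w_i\,F_i \;\in\; [S']_e . $$
The candidate algebra is $B = R'/\operatorname{ann}_{R'}(G)$. Its inverse system $\langle G\rangle$ is cyclic, so $B$ is artinian Gorenstein, and since $G$ is homogeneous of degree $e$ its socle degree is at most $e$. So everything reduces to computing the Hilbert function of $\langle G\rangle$.

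For this I would exploit the two elementary identities $z_k\circ G = F_k$ and $z_kz_\ell\circ G = 0$, which hold because every monomial of $G$ is linear in the $w$'s. They show that for a monomial $D$ of degree $e-j$ in the variables of $R'$, the element $D\circ G$ is zero if $D$ is divisible by $z_kz_\ell$ for some $k,\ell$; equals $D'\circ F_k$ if $D = z_kD'$ with $D'$ a monomial in $x_1,\dots,x_r$; and equals $\sum_i w_i\,(D\circ F_i)$ if $D$ involves no $z$. Grading $S'$ by total degree in the $w$'s, one then sees that $[\langle G\rangle]_j$ is the direct sum of its $w$-degree-$0$ part, which is precisely $[M]_j$ and so has dimension $h_j$, and its $w$-degree-$1$ part $V_j := \operatorname{span}_k\{\,D\circ G : D \text{ a monomial of degree } e-j \text{ in } x_1,\dots,x_r\,\}$. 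The crux is $\dim_k V_j$: the $k$-linear map $[R]_{e-j}\to S'$, $D\mapsto D\circ G$, has kernel exactly $[\operatorname{ann}(M)]_{e-j} = [I]_{e-j}$ (because $\sum_i w_i(D\circ F_i)=0$ forces $D\circ F_i=0$ for all $i$), so $\dim_k V_j = \dim_k [R/I]_{e-j} = h_{e-j}$. Adding, $\dim_k [\langle G\rangle]_j = h_j + h_{e-j} = H_j$ for $1\le j\le e-1$, while the two endpoint degrees $j=0,e$ give $1$ directly. Finally, $\dim_k [\langle G\rangle]_1 = r+t = \dim_k [R']_1$ forces $\operatorname{ann}(G)$ to contain no linear form, so $r+t$ is honestly the codimension of $B$; and since $h_j>0$ for $0\le j\le e-1$ one has $H_j>0$ throughout, so $B$ indeed has socle degree $e$ and $h$-vector $(1,H_1,\dots,H_{e-1},1)$.

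The only nonroutine step — and the place I would expect to spend the most care — is the analysis of $V_j$: verifying that the $w$-grading genuinely splits $[\langle G\rangle]_j$ as claimed, and that the contraction map $D\mapsto D\circ G$ has kernel exactly $[I]_{e-j}$, which is where the hypothesis that $A$ is level (so that the $F_i$ are inverse-system generators of $A$ itself) gets used; the endpoint degrees $j=0,e$ and the codimension bookkeeping must also be checked, so that the asserted $h$-vector is attained exactly rather than merely dominated. As an alternative to writing down $G$ explicitly, one can recognize $B$ as the graded trivial extension $A\ltimes\omega_A(-e)$ of $A$ by a twist of its canonical module: that this is Gorenstein is Reiten's theorem (the trivial extension of a Cohen--Macaulay ring by its canonical module is Gorenstein), and the Hilbert-function claim reduces to the identity $\dim_k [\omega_A(-e)]_j = \dim_k [A]_{e-j}$.
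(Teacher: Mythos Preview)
The paper does not supply its own proof of this lemma: it merely names the construction (``trivial extensions''), attributes it to Stanley, and cites Reiten for the converse direction. So there is no argument in the paper to compare against line by line.

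Your inverse-system construction is correct. The key computations---that $z_k\circ G=F_k$, that any monomial quadratic in the $z$'s kills $G$, and that the $w$-grading splits $[\langle G\rangle]_j$ as $[M]_j\oplus V_j$ with $\dim V_j=\dim[R/I]_{e-j}$---all go through as you say, and the bookkeeping at the endpoints and for the codimension is right. One small stylistic point: the direct-sum decomposition is immediate once you observe that $G$ is homogeneous of $w$-degree~$1$, so every element of $\langle G\rangle$ has $w$-degree $0$ or $1$; you might state this up front rather than flagging it as ``the place to spend the most care.''

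Your closing remark actually recovers what the paper's citations point to: the algebra $B$ you build is (isomorphic to) the idealization $A\ltimes\omega_A(-e)$, whose Gorensteinness is Reiten's theorem and whose Hilbert function is read off from $\dim_k[\omega_A]_{-i}=\dim_k[A]_i$ by Matlis duality. So your explicit $G=\sum w_iF_i$ is a concrete inverse-system realization of Stanley's abstract trivial extension, and either route yields the lemma; the inverse-system version has the advantage of being self-contained within the framework the paper already sets up, while the $A\ltimes\omega_A$ formulation explains the name and connects to the cited literature.
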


The following is a  useful construction due to Iarrobino (\cite{iarrobino}, essentially Proposition 3.3; see also  \cite{fl}). Iarrobino's result can be phrased as follows.

\begin{proposition}[Iarrobino]\label{compressed}
Let $M\subset S=k[y_1\dots,y_r]$ be the inverse system module of a level algebra with $h$-vector $(1,h'_1,\dots, h'_e)$, and let $F\in S$ be a degree $e$  \emph{general} form (with respect to the Zariski topology). Then the $h$-vector of the level algebra corresponding to the inverse system module $\langle M,F\rangle$ is $(1,h_1,\dots, h_e)$, where for all $i$,
$$h_i=\min \left\{\binom{r-1+i}{i}, h'_i+\binom{r-1+e-i}{e-i}\right\}.$$
\end{proposition}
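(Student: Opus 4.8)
The plan is to compute the Hilbert function of $R/\operatorname{ann}(\langle M, F\rangle)$ degree by degree, using that this Hilbert function equals $\dim_k[\langle M,F\rangle]_i$. Fix a degree $i$ with $0 \le i \le e$. Since contraction (or differentiation) by forms of degree $i$ sends degree-$e$ elements of $S$ to degree-$(e-i)$ elements, the graded piece $[\langle M, F\rangle]_i$ is spanned by $[M]_i$ together with the image of $F$ under the contraction action of all monomials of degree $e-i$; that is, $[\langle M,F\rangle]_i = [M]_i + R_{e-i}\circ F$, where $R_{e-i}\circ F \subseteq S_i$ denotes the span of all $i$-forms obtained by contracting $F$ with $(e-i)$-forms. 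One inequality is immediate: $h_i = \dim_k[\langle M,F\rangle]_i \le \dim_k S_i = \binom{r-1+i}{i}$, and also $h_i \le \dim_k[M]_i + \dim_k(R_{e-i}\circ F) \le h'_i + \binom{r-1+e-i}{e-i}$, the last bound because $R_{e-i}$ has dimension $\binom{r-1+e-i}{e-i}$ and surjects onto $R_{e-i}\circ F$. So $h_i$ is at most the claimed minimum; the content is the reverse inequality.

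For the reverse inequality I would argue by genericity of $F$. First, when the minimum is $\binom{r-1+i}{i}$: I claim a general $F$ of degree $e$ has $R_{e-i}\circ F = S_i$ whenever $\binom{r-1+i}{i} \le \binom{r-1+e-i}{e-i}$, i.e.\ whenever $i \le e-i$. Indeed, the algebra $R/\operatorname{ann}(F)$ associated to a general single form $F$ is the \emph{compressed} Gorenstein algebra, whose Hilbert function is $\min\{\binom{r-1+i}{i}, \binom{r-1+e-i}{e-i}\}$ — this is the classical fact that a general form is as ``fat'' as Macaulay's bound allows. One can see it via a semicontinuity/openness argument: the condition ``$R_{e-i}\circ F$ has the maximal possible dimension'' is Zariski-open in $F$, and it is nonempty because one exhibits a single $F$ (e.g.\ a suitable sum of powers of general linear forms, or a monomial-based example) for which it holds; by the upper bound just proved, this maximal dimension is exactly $\min\{\binom{r-1+i}{i},\binom{r-1+e-i}{e-i}\}$. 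In particular, for $i\le e-i$ the span $R_{e-i}\circ F$ fills all of $S_i$, so $[\langle M,F\rangle]_i = S_i$ and $h_i = \binom{r-1+i}{i}$, matching the minimum in this range.

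It remains to handle $i > e-i$, where the asserted value is $\min\{\binom{r-1+i}{i}, h'_i + \binom{r-1+e-i}{e-i}\}$, and one needs $\dim_k([M]_i + R_{e-i}\circ F) \ge$ this quantity. By the previous paragraph a general $F$ has $\dim_k(R_{e-i}\circ F) = \binom{r-1+e-i}{e-i}$ (the full, since $e-i < i$ forces $\binom{r-1+e-i}{e-i}\le\binom{r-1+i}{i}$), so the issue is purely the intersection: one must show $\dim_k([M]_i \cap R_{e-i}\circ F)$ is as small as the numbers allow, namely $\max\{0,\; h'_i + \binom{r-1+e-i}{e-i} - \binom{r-1+i}{i}\}$. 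This is again a Zariski-open condition on $F$: ``the subspace $[M]_i$ (fixed) and the subspace $R_{e-i}\circ F$ (varying with $F$) meet in the expected dimension.'' Nonemptiness of this open set is the crux — one must produce one $F$ realizing generic intersection with \emph{every} $[M]_i$ simultaneously across all $i$; here I would either cite Iarrobino's original transversality computation or give a dimension count on the incidence variety $\{(F, v) : v \in [M]_i,\ v \in R_{e-i}\circ F\}$ and project. Combining: for such a general $F$, $\dim_k[\langle M,F\rangle]_i = \dim[M]_i + \dim(R_{e-i}\circ F) - \dim(\text{intersection}) = h'_i + \binom{r-1+e-i}{e-i} - \max\{0, h'_i + \binom{r-1+e-i}{e-i} - \binom{r-1+i}{i}\} = \min\{\binom{r-1+i}{i},\ h'_i+\binom{r-1+e-i}{e-i}\}$, as claimed. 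Finally, since there are only finitely many degrees $i$, the finitely many nonempty Zariski-open conditions intersect in a nonempty open set, so a \emph{single} general $F$ works for all $i$ at once; and $\langle M, F\rangle$ is generated in degree $e$ by construction, so its annihilator defines a level algebra. The main obstacle, as indicated, is the transversality statement — verifying that a general $F$'s derivative space $R_{e-i}\circ F$ meets the fixed space $[M]_i$ in the expected dimension — which is exactly the technical heart of Iarrobino's Proposition 3.3 and which I would ultimately invoke rather than reprove.
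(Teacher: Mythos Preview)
The paper does not actually prove Proposition~\ref{compressed}; it is stated there as a known result of Iarrobino, with a citation to \cite{iarrobino} (essentially Proposition~3.3) and \cite{fl}, and no argument is given. So there is nothing in the paper to compare your attempt against.

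As for the attempt itself: your outline is the standard one and is correct as far as it goes. The decomposition $[\langle M,F\rangle]_i=[M]_i+R_{e-i}\circ F$ and the upper bound are fine, and you correctly isolate the real content as the transversality claim that, for general $F$, the fixed subspace $[M]_i$ and the moving subspace $R_{e-i}\circ F$ meet in the dimension forced by linear algebra. You then explicitly defer this step back to Iarrobino's original argument. That is honest, but it means your write-up is a reduction to the very proposition being asserted rather than an independent proof: the ``crux'' you flag is precisely what Proposition~3.3 of \cite{iarrobino} establishes. If you want a self-contained argument, the missing ingredient is to exhibit, for each $i$, at least one $F$ for which the intersection has the expected dimension (openness and the finite intersection at the end are, as you note, routine); this is typically done either by a parameter count on the catalecticant locus or by specializing $F$ to a sum of powers of sufficiently many general linear forms and invoking the known Hilbert function of such power-sum modules.
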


Finally, we recall a well-known method to, given a Gorenstein sequence of codimension $r$, produce a new Gorenstein sequence of codimension $r+1$ with similar properties. 

\begin{lemma} \label{increase codim}

If a Gorenstein algebra with $h$-vector $(1,H_1,H_2,\dots,H_{e-1},1)$ corresponds to the cyclic inverse system module $\langle F\rangle \subset k[y_1,\dots,y_r]$, then the cyclic module $\langle F+y_{r+1}^e\rangle \subset k[y_1,\dots,y_r,y_{r+1}]$ yields a Gorenstein algebra with $h$-vector
$$(1,H_1,H_2,\dots,H_{e-1},1)+(0,1,1,\dots,1,0)=(1,H_1+1,H_2+1,\dots,H_{e-1}+1,1).$$

\end{lemma}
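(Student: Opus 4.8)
The plan is to work directly with the inverse system description, since Lemma~\ref{increase codim} is stated entirely in those terms. Let $F \in k[y_1,\dots,y_r]$ be a form of degree $e$ whose annihilator $I = \operatorname{ann}(F) \subset R = k[x_1,\dots,x_r]$ defines a Gorenstein algebra with $h$-vector $(1,H_1,\dots,H_{e-1},1)$. Set $R' = k[x_1,\dots,x_r,x_{r+1}]$, $S' = k[y_1,\dots,y_r,y_{r+1}]$, and $G = F + y_{r+1}^e \in S'$. The goal is to compute $\dim_k [R'/\operatorname{ann}(G)]_i$ for each $i$, and since the Hilbert function of $R'/\operatorname{ann}(G)$ agrees with that of the inverse system module $\langle G \rangle \subseteq S'$, it suffices to compute $\dim_k [\langle G\rangle]_i = \dim_k (R' \circ G)_i$, the dimension of the space of $i$-th order ``partials'' (contractions) of $G$.

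The key structural observation is that $G$ splits as a sum of two forms in disjoint sets of variables: $F$ only involves $y_1,\dots,y_r$, and $y_{r+1}^e$ only involves $y_{r+1}$. First I would record that for a monomial $x^\alpha x_{r+1}^b \in R'$ of degree $d \le e$, contraction against $G$ gives $(x^\alpha \circ F)$ if $b = 0$, gives $y_{r+1}^{e-b}$ if $\alpha = 0$ and $b \le e$, and gives $0$ if both $\alpha \ne 0$ and $b \ge 1$ (since $x^\alpha$ kills $y_{r+1}^e$ and $x_{r+1}^b$ kills $F$). Hence in degree $i$, the space $(R' \circ G)_i$ is spanned by $\{ x^\alpha \circ F : |\alpha| = i\} \cup \{ y_{r+1}^{e-i} \}$ when $1 \le i \le e-1$, and these two pieces live in complementary subspaces of $S'_{e-i}$ (one involving only $y_1,\dots,y_r$, the other a pure power of $y_{r+1}$), so the dimension is exactly $\dim_k (R \circ F)_i + 1 = H_i + 1$. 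For $i = 0$ we get $\langle G\rangle_e$ which is $1$-dimensional (spanned by $G$ itself), and for $i = e$ we get the degree-$0$ part, also $1$-dimensional. This yields precisely the claimed $h$-vector $(1, H_1+1,\dots,H_{e-1}+1,1)$.

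It remains to check that $R'/\operatorname{ann}(G)$ is genuinely Gorenstein, i.e., that $\langle G\rangle$ is a cyclic inverse system module (equivalently, that the algebra is level of type $1$). But this is immediate from the general theory of inverse systems recalled in the excerpt: the module generated by a single homogeneous form $G$ of degree $e$ always corresponds to a Gorenstein artinian quotient of $R'$ with socle degree $e$. Symmetry of the resulting $h$-vector is then automatic, and indeed one can see it directly: the vector $(1,H_1+1,\dots,H_{e-1}+1,1)$ is symmetric because $(1,H_1,\dots,H_{e-1},1)$ is and $(0,1,\dots,1,0)$ is. One small point worth verifying in passing is that $G \ne 0$ and has degree exactly $e$ with a genuine dependence on $y_{r+1}$, so that the codimension of the new algebra is exactly $r+1$ (no linear form of $R'$ annihilates $G$); this follows since $x_{r+1} \circ G = y_{r+1}^{e-1} \ne 0$ and any $x_j \circ G = x_j \circ F$, which is nonzero for the relevant $j$ because $R/I$ has codimension $r$.

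The main obstacle, such as it is, is simply being careful about the ``mixed'' monomials: one must confirm that a monomial involving both some $x_j$ with $j \le r$ and $x_{r+1}$ annihilates $G$, so that the two contributions to $(R'\circ G)_i$ really are spanned by the $x^\alpha \circ F$ with $\alpha$ supported on $\{1,\dots,r\}$ together with the single pure power $y_{r+1}^{e-i}$, and that these two families are linearly independent from each other. Once that bookkeeping is in place — and it follows directly from the defining formula for the contraction action given in the excerpt — the Hilbert function computation is an immediate degree-by-degree count, so there is no serious technical difficulty; the lemma is essentially a clean application of the ``direct sum of forms in disjoint variables'' principle for inverse systems.
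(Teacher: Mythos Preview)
Your argument is correct. The paper itself does not supply a proof of Lemma~\ref{increase codim}; it is simply recorded there as a well-known construction, so there is no ``paper's proof'' to compare against. What you have written is exactly the standard verification one would give: the form $G=F+y_{r+1}^e$ is a sum of two forms in disjoint sets of variables, mixed monomials in $R'$ annihilate $G$, and hence for $1\le i\le e-1$ the space $[R']_i\circ G$ decomposes as the direct sum of $[R]_i\circ F\subset k[y_1,\dots,y_r]_{e-i}$ and the line $k\cdot y_{r+1}^{e-i}$, giving dimension $H_i+1$. The remark that a single degree-$e$ form always yields a Gorenstein quotient handles the type-$1$ claim, and your check that no linear form annihilates $G$ confirms the codimension is $r+1$.

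One purely notational quibble: when you write ``$(R'\circ G)_i$'' and then describe it as spanned by $\{x^\alpha\circ F:|\alpha|=i\}\cup\{y_{r+1}^{e-i}\}$, those elements actually sit in degree $e-i$ of $S'$, so the subscript is potentially confusing. From your treatment of the endpoints $i=0$ and $i=e$ it is clear you mean $[R']_i\circ G$ (the image of degree-$i$ operators), whose dimension is indeed $\dim_k[R'/\operatorname{ann}(G)]_i$ via the kernel--image count for the contraction map $[R']_i\to S'_{e-i}$. It would be cleaner to say this explicitly rather than conflating it with $[\langle G\rangle]_i$, even though symmetry makes the two dimensions agree. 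This is cosmetic; the mathematics is sound.
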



\section{Main results}

In this section we present the main results of this paper, which contribute to our understanding of the possible codimensions and socle degrees of unimodal, non-SI Gorenstein $h$-vectors.

\begin{theorem}\label{e}
There exists a unimodal, non-SI Gorenstein $h$-vector of socle degree $e$ if and only if $e\ge 6$.
\end{theorem}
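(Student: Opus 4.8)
The plan is to prove the two directions separately: that every unimodal, non-SI Gorenstein $h$-vector must have socle degree $e \geq 6$, and that for every $e \geq 6$ such an $h$-vector actually exists.

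For the necessity direction, I would argue that for small socle degree a unimodal symmetric sequence is automatically an SI-sequence. If $e \leq 1$ the only sequences are $(1)$ and $(1,1)$, trivially SI. For $e = 2,3$ a symmetric unimodal Gorenstein sequence has first half $(1,h_1)$ or $(1,h_1)$, and $(1,h_1)$ is always differentiable (its first difference $(1,h_1-1)$ is an $O$-sequence), so these are SI. For $e = 4,5$ the first half is $(1,h_1,h_2)$; here I need to check that unimodality plus the Gorenstein/Macaulay constraints force $(1,h_1,h_2)$ to be differentiable. The key point is Macaulay's bound: since $(1,h_1,\dots,h_e)$ is an $O$-sequence with $h_2 \leq h_1^{<1>} = \binom{h_1+1}{2}$, one checks directly that $(1, h_1 - 1, h_2 - h_1)$ satisfies Macaulay's inequality, i.e. $h_2 - h_1 \leq (h_1-1)^{<1>} = \binom{h_1}{2}$, which reduces to $h_2 \leq \binom{h_1+1}{2}$ — exactly the Macaulay bound already in force. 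So any length-three initial segment coming from a Gorenstein (indeed any) $h$-vector is differentiable, hence for $e \le 5$ symmetry plus the $O$-sequence property already give an SI-sequence. This handles $e \leq 5$.

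For the sufficiency direction, I would first produce a single unimodal, non-SI Gorenstein $h$-vector in socle degree $e = 6$, and then propagate it to all larger $e$. The natural first-half candidate to violate differentiability is something like $(1,r,r,\ldots)$ with a repeated value where the first difference has a $0$ followed by a positive entry — e.g. first half $(1,3,3)$ is fine but $(1,r,r,h_3)$ with $h_3 > r$ gives first difference $(1,r-1,0,h_3-r)$, which violates Macaulay (after a $0$ you cannot grow). So I want a symmetric $h$-vector of socle degree $6$ of the form $(1,r,r,h_3,r,r,1)$ with $h_3 > r$; this is unimodal but not SI. The task is then to realize such a sequence as a genuine Gorenstein $h$-vector, and this is where the constructions recalled in the excerpt come in. I would build an appropriate level algebra of socle degree $3$ (or smaller) with a suitable $h$-vector and apply Stanley's trivial extension (Lemma \ref{te}), choosing the pieces so that $H_i = h_i + h_{e-i}$ yields the desired shape; for instance starting from a level $h$-vector $(1,a,b)$ or $(1,a,b,c)$ of type chosen so that the symmetrization has a strict bump in the middle. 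Concretely I expect something like: a level algebra with $h$-vector $(1,r,r)$ trivially extends (after combining with another level algebra, or by a two-step construction) to hit $(1,r,r,h_3,r,r,1)$. Once one explicit example in socle degree $6$ is in hand, Lemma \ref{increase codim} raises the codimension keeping the shape unimodal-but-non-SI, and to raise the socle degree I would again use trivial extensions: given a unimodal non-SI Gorenstein algebra in socle degree $e$, pad by trivially extending an appropriate level truncation, or more simply observe that inserting extra equal entries in the middle (via trivial extension of a longer level algebra whose $h$-vector already carries the non-differentiable bump in its relevant portion) produces socle degree $e+1$ examples; since the obstruction to being SI sits in a fixed window of the first half, it survives lengthening.

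The main obstacle I anticipate is the explicit construction in socle degree exactly $6$: one must exhibit an honest level algebra whose trivial extension (or whose $\langle M, F\rangle$ via Iarrobino's Proposition \ref{compressed}) has first difference of the first half failing Macaulay's inequality, while the whole sequence remains a valid Gorenstein $O$-sequence and is unimodal. This requires choosing the numerical parameters $r$ and $h_3$ so that all Macaulay bounds for the full symmetric sequence hold (so it is a legitimate candidate), the trivial-extension input is a bona fide level $h$-vector (itself needing a Macaulay/level check), and yet $(1,h_1-1,h_2-h_1,h_3-h_2,\ldots)$ fails to be an $O$-sequence at the step from the repeated entry. Verifying that the constructed module genuinely has the claimed Hilbert function — rather than something smaller — is the delicate part, and is presumably where the characteristic-free aspect needs care; I would lean on the inverse-systems machinery and the generality statement in Proposition \ref{compressed}, or on a direct monomial-ideal computation that is manifestly characteristic independent.
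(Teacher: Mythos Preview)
Your necessity argument ($e \le 5$) is correct and is essentially the paper's: for $e \le 5$ the first half has length at most three, and any $O$-sequence $(1,h_1,h_2)$ is automatically differentiable because $h_2 - h_1 \le \binom{h_1}{2}$ is equivalent to the Macaulay bound $h_2 \le \binom{h_1+1}{2}$.

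The sufficiency direction, however, has a genuine gap. You propose to (i) realize a Gorenstein sequence of the shape $(1,r,r,h_3,r,r,1)$ with $h_3>r$ in socle degree $6$, and then (ii) propagate to all larger $e$. Neither step is carried out. For (i), you do not exhibit any level algebra whose trivial extension has this shape; a level $(1,r,r)$ trivially extends to socle degree $3$, not $6$, and the ``two-step'' alternative is left unspecified. Whether $(1,r,r,h_3,r,r,1)$ with $h_3>r$ is \emph{ever} Gorenstein is not obvious, and nothing in your outline forces the Hilbert function to stall at $r$ in degree $2$ while jumping in degree $3$. For (ii), there is no standard operation that ``inserts equal entries in the middle'' of a Gorenstein $h$-vector, and trivial extension takes a \emph{level} input, not a Gorenstein one, so you cannot simply feed your socle-degree-$6$ Gorenstein example back in. Since the non-SI obstruction you chose (a $0$ followed by a positive entry in the first difference) sits at the \emph{center} of the sequence, lengthening constructions would have to recreate it from scratch each time.

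The paper avoids both problems by giving a single uniform construction for every $e \ge 6$. It starts from the level $h$-vector $(1,2,\dots,e)$ of the truncated ring $k[x,y]$, adds a general degree $e-1$ form in three variables via Proposition~\ref{compressed} to obtain a level $h$-vector $h_i=\min\{\binom{i+2}{2},(i+1)+\binom{e-i+1}{2}\}$, and then takes the trivial extension. The resulting Gorenstein $H$ has $H_1=e+4$, $H_2=e+8$, $H_3=e+14$, so the first difference of the first half begins $(1,e+3,4,6,\dots)$; the jump $4\to 6$ violates Macaulay (since $4^{\langle 2\rangle}=5$), giving non-SI for every $e\ge 6$ at once. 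Note the mechanism is different from yours: the failure of differentiability is excessive growth ($4\to 6$), not a zero followed by a positive term.
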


\begin{proof}
By symmetry, all Gorenstein $h$-vectors of socle degree $e=3$ are of the form $h=(1,r,r,1)$, and therefore are clearly SI-sequences. When $e=4$, a Gorenstein sequence $h$ is of the form $h=(1,r,a,r,1)$, where $r\le a\le \binom{r+1}{2}$ if $h$ is unimodal. Hence, the first difference of the first half of $h$ is $(1,r-1,a-r)$, and it is immediate to verify that it satisfies Macaulay's Theorem \ref{macaulay}. The proof for $e=5$, where $h$ Gorenstein must be of the form $(1,r,b,b,r,1)$, is entirely similar and will be omitted.

Now let $e\ge 6$. We begin with the $h$-vector
$$(1,h'_1=2,h'_2=3,\dots,h'_{e-1}=e),$$
which is level since it is given by the algebra $A$ obtained as a truncation of the polynomial ring $k[x,y]$ after degree $e-1$.

Consider a degree $e-1$ general form $F$ in $k[x,y,z]$. By Proposition \ref{compressed}, the inverse system module $\langle M,F\rangle $ yields a level $h$-vector $(1,h_1,\dots,h_{e-1})$, where for all $i$,
$$h_i=\min \left\{\binom{i+2}{2},(i+1)+\binom{e-i+1}{2}\right\}.$$

Therefore, by Lemma \ref{te}, we obtain a Gorenstein $h$-vector $(1,H_1,\dots,H_{e-1},H_e=1)$, with $H_i=h_i+h_{e-i}$ for all indices $i$. It is easy to see that $H$ is unimodal, since by construction, $h_i\ge h_{i-1}$ and $h_{e-i}\ge h_{e-(i-1)}$ for all  $i\le e/2$.

Also notice that, since $e\ge 6$,
$$H_1=3+\left(e+\binom{2}{2}\right)=e+4;$$
$$H_2=6+\left(e-1+\binom{3}{2}\right)=e+8;$$
$$H_3=10+\left(e-2+\binom{4}{2}\right)=e+14.$$

It follows that the first difference of $H$ begins with
$$(1,(e+4)-1,(e+8)-(e+4),(e+14)-(e+8),\dots)=(1,e+3,4,6,\dots),$$
in violation of Macaulay's Theorem \ref{macaulay} from degree 2 to 3. Thus, $H$ is not an SI-sequence.
\end{proof}

From the proof of  Theorem \ref{e}, we can in fact deduce the following.

\begin{corollary}\label{e4}
For any integers $e\ge 6$ and $r\ge e+4$, there exists a unimodal, non-SI Gorenstein $h$-vector of codimension $r$ and socle degree $e$.
\end{corollary}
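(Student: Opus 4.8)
The plan is to bootstrap from the construction carried out in the proof of Theorem~\ref{e}. For each fixed $e\ge 6$ that argument already produces a unimodal Gorenstein $h$-vector $H=(1,H_1,\dots,H_{e-1},1)$ of socle degree $e$ that fails to be an SI-sequence, and its codimension is exactly $H_1=e+4$. To reach an arbitrary codimension $r\ge e+4$ I would simply iterate Lemma~\ref{increase codim}: since $H$ is Gorenstein it corresponds to a cyclic inverse system module $\langle F\rangle\subset k[y_1,\dots,y_{e+4}]$ with $\deg F=e$, and Lemma~\ref{increase codim} turns this into a Gorenstein $h$-vector of socle degree $e$ and codimension $e+5$, namely $H+(0,1,1,\dots,1,0)$. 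Applying the lemma a total of $r-(e+4)$ times (a nonnegative number of steps, as $r\ge e+4$) yields a Gorenstein $h$-vector of socle degree $e$ and codimension $r$. It then remains only to check that unimodality and the failure of the SI property are inherited at each step.

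Unimodality is immediate: if $G=(1,G_1,\dots,G_{e-1},1)$ is symmetric and nondecreasing in degrees $0,1,\dots,\lfloor e/2\rfloor$, the same is true of $G+(0,1,\dots,1,0)$, because the added hat vector $(0,1,\dots,1,0)$ also has both of these properties, and a symmetric sequence that is nondecreasing on its first half is unimodal. Since $H$ is unimodal and symmetric, every $h$-vector produced along the iteration is unimodal.

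For the SI property the key point is that adding $(0,1,\dots,1,0)$ leaves the first difference $(\Delta G)_i=G_i-G_{i-1}$ unchanged for every degree $i$ with $2\le i\le e-1$, since both $G_i$ and $G_{i-1}$ go up by $1$. By the computation in the proof of Theorem~\ref{e}, the first difference of the first half of $H$ begins $(1,e+3,4,6,\dots)$, and this initial segment in degrees $0$ through $3$ is therefore untouched by every application of Lemma~\ref{increase codim}. Because $e\ge 6$ forces $3\le\lfloor e/2\rfloor$, degree $3$ still lies within the first half, so the jump from $4$ in degree $2$ to $6$ in degree $3$ violates Macaulay's Theorem~\ref{macaulay} ($4^{<2>}=5<6$) for every intermediate $h$-vector; none of them is an SI-sequence, and the one of codimension $r$ is the example we want. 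The whole argument is bookkeeping, and the single spot that deserves a moment's attention is precisely this last one: one must observe that repeatedly adding $1$ to the interior entries cannot repair the Macaulay obstruction that rules out SI, which follows at once from the fact that this operation does not alter the relevant part of the first difference.
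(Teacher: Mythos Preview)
Your argument is correct and follows exactly the route the paper takes: start from the codimension-$(e+4)$ example built in Theorem~\ref{e} and iterate Lemma~\ref{increase codim}, noting that adding $(0,1,\dots,1,0)$ preserves unimodality and leaves the Macaulay violation in the first difference at degrees~$2$ and~$3$ intact. One small wording slip: the first difference in degree~$1$ \emph{does} increase by one at each step (only degrees $2,\dots,e-1$ are unchanged), but this is irrelevant since the SI obstruction you invoke lives at degrees~$2$ and~$3$.
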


\begin{proof}
The Gorenstein $h$-vectors $H$ constructed in the proof of Theorem \ref{e} have codimension $r=H_1=e+4$, for any $e\ge 6$. 
Then, using Lemma \ref{increase codim}, one can easily produce a unimodal, non-SI Gorenstein $h$-vector of any codimension  $r\ge e+4$ and socle degree $e$, as desired.
\end{proof}

\begin{example}\label{10}
The simplest example of a unimodal, non-SI Gorenstein $h$-vector given by the construction of Theorem \ref{e} is the following, of socle degree $e=6$:
$$(1,10,14,20,14,10,1).$$
By Lemma \ref{increase codim},
$$(1,a+10,a+14,a+20,a+14,a+10,1)$$
is therefore also a unimodal, non-SI Gorenstein $h$-vector, for any integer $a\ge 0$.
\end{example}

\begin{theorem}\label{r}
There exists a unimodal, non-SI Gorenstein $h$-vector in any codimension $r\ge 5$.
\end{theorem}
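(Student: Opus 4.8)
The plan is to reduce the codimension-$r$ statement, for $r \ge 5$, to the socle-degree results already in hand together with the codimension-raising operation of Lemma \ref{increase codim}. By Corollary \ref{e4}, there is a unimodal, non-SI Gorenstein $h$-vector of socle degree $6$ in codimension $10$; more generally Lemma \ref{increase codim} lets us raise the codimension one step at a time, so the construction of Theorem \ref{e} already settles every codimension $r \ge 10$ (take $e=6$), and in fact settles codimensions $r \ge e+4$ for each $e \ge 6$. So the content of Theorem \ref{r} is really the remaining cases $r = 5,6,7,8,9$, and the natural approach is to exhibit, for each of these five codimensions, an explicit unimodal, non-SI Gorenstein $h$-vector; Lemma \ref{increase codim} then need only be invoked to patch any gaps, though in fact once we have one example in each of $r=5,\dots,9$ we are done outright.

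First I would recall the classical non-unimodal Gorenstein examples in codimension $5$ --- Stanley's $(1,5,12,12,5,1)$ is not relevant here since we want \emph{unimodal} sequences, so instead I would look for a symmetric unimodal sequence whose first half fails Macaulay's bound. The cleanest source is again the trivial extension of Lemma \ref{te}: start from a level algebra in few variables with a ``fast then flat'' Hilbert function, and check that the doubled-up symmetric sequence has first half violating Theorem \ref{macaulay} at some spot, while remaining unimodal. For codimension $5$ one wants $H_1 = 5$, so the underlying level $h$-vector $(1,h_1,\dots,h_{e-1})$ must have $h_1 + h_{e-1} = 5$; a natural candidate is to take a level algebra with $h$-vector of the shape $(1,2,h_2,\dots,h_{e-2},3)$ (or $(1,3,\dots,2)$) and socle degree $e-1$ chosen large enough that the symmetric completion picks up a strict increase in the first difference. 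I expect $e$ will need to be taken moderately large here, since in low socle degree the first half is too short for a Macaulay violation to occur; this is why the $r=4$ case is genuinely hard and left open, but for $r = 5$ there is more room.

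Concretely, I would try to build, for each $r \in \{5,6,7,8,9\}$, a level $h$-vector that is differentiable and grows to its maximum and then stays flat (so that the trivial extension is visibly unimodal), chosen so that $h_i + h_{e-i}$ as a function of $i$ has a first difference that strictly increases at the transition point --- exactly the phenomenon exploited in the proof of Theorem \ref{e}, where the first difference of $H$ read $(1,e+3,4,6,\dots)$ and violated Macaulay from degree $2$ to degree $3$. The verification in each case is a finite check: (i) confirm the underlying sequence is a level $O$-sequence (e.g. by realizing it as a truncation, or an ideal generated by powers of variables, or via Proposition \ref{compressed} with a general form), (ii) compute $H_i = h_i + h_{e-i}$ and confirm unimodality, and (iii) compute the first difference of the first half of $H$ and locate a violation of Theorem \ref{macaulay}. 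The main obstacle is step (iii) together with the choice of the seed level algebra: one must tune the codimension split ($h_1$ versus $h_{e-1}$) and the socle degree simultaneously so that a Macaulay violation actually materializes in the \emph{first half} --- for small $r$ the binomial growth bound $h_i^{<i>}$ is very permissive in low degrees, so the violation has to be engineered in a slightly higher degree, forcing $e$ up. Once the five explicit examples are in place (or a single parametric family covering $r=5,\dots,9$, for instance by the analogue of Theorem \ref{e}'s construction applied to $k[x,y]$ truncations of varying length), the theorem follows, with Lemma \ref{increase codim} available to bridge any remaining codimension not directly hit.
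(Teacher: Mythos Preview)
Your overall plan is sound and in fact matches the paper's architecture: reduce to codimension $r=5$ via Lemma \ref{increase codim}, build a level $h$-vector $(1,h_1,\dots,h_{e-1})$ with $h_1+h_{e-1}=5$, take its trivial extension via Lemma \ref{te}, and then check unimodality and a Macaulay violation in the first difference of the first half. You also correctly anticipate that $e$ must be pushed up to make the violation land in the first half.

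The genuine gap is that you do not actually construct the level seed, and the methods you list for step (i) --- a truncation, an ideal of powers of variables, or a single application of Proposition \ref{compressed} --- will not produce the required shape. What is needed is a codimension-$3$, type-$2$ level $h$-vector that is differentiable up to a degree just past the midpoint, then falls as slowly as possible back to $2$; the paper uses, for $d\ge 10$,
\[
h=\Bigl(1,3,6,\dots,\tbinom{d+2}{2},\tbinom{d+2}{2}+1,\tbinom{d+2}{2}+2,\tbinom{d+2}{2}+3,\dots,\min\{\tbinom{d+2}{2}+3,2\tbinom{j+2}{2}\},\dots,6,2\Bigr).
\]
Proving this is level is the real content of the argument: one realizes the first-difference $(1,2,\dots,d+1,1,1,1)$ as the $h$-vector of a set $Z=Z_1\cup Z_2\subset\mathbb{P}^2$ (general points plus points on a line), takes two general linear combinations $F_1,F_2$ of $2d$-th powers of the dual linear forms, and then computes the rank of the matrix of $j$-th derivatives via a block decomposition exploiting that derivatives involving $x$ annihilate the $M_i$-powers. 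This rank computation is what pins down $h_{2d-j}$ and is not something a truncation or a single general form can deliver. Once $h$ is known to be level, the trivial extension $H$ has $H_1=5$, and one checks $H_d-H_{d-1}=d$ while $H_{d-1}-H_{d-2}=d-1$, violating Theorem \ref{macaulay} at degrees $d-1,d$ --- exactly the kind of violation you predicted, but placed deep in the first half rather than at degrees $2,3$. Your proposal stops precisely where the work begins.
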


\begin{proof}
By Lemma \ref{increase codim}, it suffices to show the result for $r=5$. In fact, we will produce an infinite family of unimodal, non-SI Gorenstein $h$-vectors of codimension five and socle degree $e$, for any $e\ge 20$.

We begin with $e$ odd. Fix an integer $d\ge 10$, and consider the $h$-vector $h = (1,h_1,\dots,h_{2d})$ where
\[
\begin{array}{lcl}
h_1 & = & 3, \\
h_2& = & 6, \\
& \vdots \\
h_d & = & \binom{d+2}{2}, \\
h_{d+1}& = & \binom{d+2}{2}+1, \\
h_{d+2}& = & \binom{d+2}{2}+2, \\
h_{d+3}& = & \binom{d+2}{2}+3, \\
h_{d+4} & = & \min \left\{\binom{d+2}{2}+3,2\binom{d-2}{2}\right\}, \\
& \vdots \\
h_{2d-j} & = & \min \left\{\binom{d+2}{2}+3,2\binom{j+2}{2}\right\}, \\
& \vdots \\
h_{2d-1} & = & 6, \\
h_{2d} & = & 2.
\end{array}
\]

Notice that $h_{d+3}=\binom{d+2}{2}+3\le 2\binom{d-1}{2}$ for $d\ge 10$. Also, $h$ is differentiable up to degree $d+3$ and, \emph{relatively to $h_{d+3}$}, it grows maximally from degree $2d$ back to degree $d+3$. 

Our main task now is to show that $h$ is a level $h$-vector.  To this end, consider the infinite, eventually constant sequence $h' = (1,h_1,\dots,h_{d+2},h_{d+3}, h_{d+3}, \dots)$. The positive part of its first difference is $(1,2,3,\dots, d,d+1,1,1,1)$, where the last 1 occurs in degree $d+3$. This is easily seen to be the $h$-vector of a set of points $Z = Z_1 \cup Z_2$ in $\mathbb P^2(k)$, consisting of $Z_1 = \binom{d+1}{2}$ general points  in $\mathbb P^2(k)$ and $Z_2 = d+4$ general points on a line. Notice that $Z_1$ lies on no curve of degree $d-1$.

Let $Z_1 = \left\{ P_1,\dots,P_{\binom{d+1}{2}} \right\}$ and $Z_2 = \{ Q_1,\dots,Q_{d+4} \}$. For $1 \leq i \leq \binom{d+1}{2}$, let $L_i$ be the linear form dual to $P_i$ (i.e., if $P_i = [a,b,c]$ then $L_i = ax +by + cz$), and for $1 \leq i \leq d+4$, let $M_i$ be the linear form dual to $Q_i$. Thus, we can take the $L_i$ to be general linear forms in three variables, namely $x,y,z$, and without loss of generality, the $M_i$ to be general linear forms in two variables, say $y,z$.
We choose both $F_1$ and $F_2$ to be general linear combinations of suitable powers of the $L_i$ and $M_i$:
\[
\begin{array}{rcl}
F_1 & = & a_1 L_1^{2d} + \dots + a_{\binom{d+1}{2}} L_{\binom{d+1}{2}}^{2d} + b_1 M_1^{2d} + \dots + b_{d+4} M_{d+4}^{2d}; \\
F_2 & = & c_1 L_1^{2d} + \dots + c_{\binom{d+1}{2}} L_{\binom{d+1}{2}}^{2d} + d_1 M_1^{2d} + \dots + d_{d+4} M_{d+4}^{2d}.
\end{array}
\]

Consider the inverse system module generated by $F_1$ and $F_2$. We will assume for convenience that the base field $k$ has characteristic zero, in order to use differentiation, but an entirely similar argument works in arbitrary characteristic by employing the linear action described in Section 2 in the definition of inverse systems (see \cite{EI} for the standard but technical details). It suffices to show that the dimension of the linear span of the $j$-th partial derivatives of $F_1$ and $F_2$ is precisely $h_{2d-j}$, for all $1 \leq j \leq d-3$. Thus, since $I_Z$ is contained in the annihilator of $\langle F_1,F_2\rangle$, it is enough to prove that these derivatives are all linearly independent for as long as possible, subject only to this constraint. 

Each $j$-th partial derivative of either $F_1$ or $F_2$ is a suitable linear combination of the $L_i^{2d-j}$ and the $M_i^{2d-j}$, and we can view them as general linear combinations except that we must remember that any partial derivative involving $x$ will annihilate all powers of the $M_i$. We will consider the $j$-th derivatives, $1 \leq j \leq d-3$. We form a matrix whose rows are the coefficients of the $L_i^{2d-j}$ and the $M_i^{2d-j}$ arising in these derivatives, and the rank of this matrix will be the desired dimension.

\begin{center}

\begin{tikzpicture}[scale = .6]

\draw (0,0) -- (0,10);
\draw (0,0) -- (11,0);
\draw (0,10) -- (11,10);
\draw (11,0) -- (11,10);
\draw (5.5,10) -- (5.5,4.7);
\draw (0,4.7) -- (11,4.7);
\node at (1.5,10.8) {\scriptsize $L_1^{2d-j}$};
\node at (3,10.8) {\scriptsize $\dots$};
\node at (4.5,10.8) {\scriptsize $L_{\binom{d+1}{2}}^{2d-j}$};
\node at (6.5,10.8) {\scriptsize $M_1^{2d-j}$};
\node at (8,10.8) {\scriptsize $\dots$};
\node at (9.5,10.8) {\scriptsize $M_{d+4}^{2d-j}$};
\node at (-2.2,9) {\scriptsize $j$-th deriv. of $F_1$};
\node at (-2.2,8.4) {\scriptsize in $x,y,z$ with at};
\node at (-2.2,7.8) {\scriptsize least one $x$};
\node at (-2.2,6.8) {\scriptsize $j$-th deriv. of $F_2$};
\node at (-2.2,6.2) {\scriptsize in $x,y,z$ with at};
\node at (-2.2,5.6) {\scriptsize least one $x$};
\node at (8.2,7.6) {\Huge $0$};
\node at (2.7,7.6) {\Huge A};
\node at (5.4,2.4) {\Huge B};
\node at (-2.2,3.7) {\scriptsize $j$-th deriv. of $F_1$};
\node at (-2.2,3.1) {\scriptsize in $y,z$};
\node at (-2.2,2.1) {\scriptsize $j$-th deriv. of $F_1$};
\node at (-2.2,1.5) {\scriptsize in $y,z$};

\end{tikzpicture}

\end{center}

In the above matrix, the region marked $0$ consists entirely of zero entries. Since $d \geq 10$, the rank of the submatrix B is $2j+2$. The matrix $A$ has $2 \binom{j+1}{2}$ rows. One can check that for our range of $d$, the conditions
\[
\binom{d+1}{2} \leq 2 \binom{j+1}{2} \ \ \hbox{ and } \ \ 2j+2 \leq d+4
\]
are incompatible. Thus if $\binom{d+1}{2} \leq 2 \binom{j+1}{2}$, then the rank of the matrix is $\binom{d+1}{2} + (d+4)$. 
All together, we see that the  rank of our matrix is

$$\min \left  \{ \binom{d+1}{2} + (d+4), 2 \binom{j+1}{2} + (2j+2) \right \} =$$$$ \min \left \{ \binom{d+2}{2} +3, 2 \binom{j+2}{2} \right \} = h_{2d-j},$$
as desired. Hence $h$ is a level $h$-vector.

Therefore, Stanley's Lemma \ref{te} gives us that $(1,H_1,\dots,H_{2d},H_{2d+1}=1)$, where $H_i=h_i+h_{2d+1-i}$ for all $i\le 2d$, is a Gorenstein $h$-vector of codimension $H_1=3+2=5$. 

Notice that $H$ is unimodal, and that
$$H_{d-2}=\binom{d}{2}+\binom{d+2}{2}+3;$$
$$H_{d-1}=\binom{d+1}{2}+\binom{d+2}{2}+2;$$
$$H_d=2\binom{d+2}{2}+1.$$

Some simple arithmetic now shows that the first difference of $H$ equals $d-1$ in degree $d-1$ and $d$ in degree $d$, which violates Macaulay's Theorem \ref{macaulay}. This proves the case of any odd socle degree $e=2d+1\ge 21$.

The construction for even socle degrees $e\ge 20$ is similar, so we just sketch it here. We begin by fixing an integer $d\ge 10$, and consider the $h$-vector $h$ defined by 
\[
\begin{array}{lcl}
h_1 & = & 3, \\
h_2& = & 6, \\
& \vdots \\
h_d & = & \binom{d+2}{2}, \\
h_{d+1}& = & \binom{d+2}{2}+1, \\
h_{d+2}& = & \binom{d+2}{2}+2, \\
h_{d+3} & = & \min \left\{\binom{d+2}{2}+2,2\binom{d-2}{2}\right\}, \\
& \vdots \\
h_{2d-1-j} & = & \min \left\{\binom{d+2}{2}+2,2\binom{j+2}{2}\right\}, \\
& \vdots \\
h_{2d-2} & = & 6, \\
h_{2d-1} & = & 2.
\end{array}
\]

An argument similar to that in the previous case gives us that $h$ is level when $d\ge 10$. We can again construct its Gorenstein trivial extension  $H$ using Lemma \ref{te}. Its entries satisfy:
$$H_{d-2}=\binom{d}{2}+\binom{d+2}{2}+2;$$
$$H_{d-1}=\binom{d+1}{2}+\binom{d+2}{2}+1;$$
$$H_d=2\binom{d+2}{2}.$$

Thus, the first difference of $H$ again equals $d-1$ in degree $d-1$ and $d$ in degree $d$, against Theorem \ref{macaulay}. This completes the proof.
\end{proof}

\begin{remark}
We only remark here that, unfortunately,  the most natural guess for a general, purely numerical result that would include the level $h$-vectors $h$ of Theorem \ref{r} is not true. Namely, a socle degree $e$ $h$-vector, differentiable  up until a certain degree and then growing maximally from degree $e$ back to that degree, does not always need to be level, even in the non-Gorenstein case. For instance, $(1,3,4,5,6,\dots,2)$ (where the dots indicate any nonnegative number of 6's) is not level; see \cite{CI}, and also \cite{GHMS}, Example 7.3.
\end{remark}

\begin{example}\label{5}
The proof of Theorem \ref{r} shows that the following unimodal, non-SI $h$-vector of codimension 5 and socle degree 20 is Gorenstein:
$$(1,5,12,22,35,51,70,92,113,122,132,122,113,92,70,51,35,22,12,5,1).$$
It is obtained, using trivial extensions, from the socle degree 19 level $h$-vector
$$(1,3,6,10,15,21,28,36,45,55,66,67,68,56,42,30,20,12,6,2).$$
\end{example}

The argument of Theorem \ref{e} easily gives the following more refined result. We omit its proof since it is analogous to that of Corollary \ref{e4}.

\begin{corollary}\label{20}
For any integers $e\ge 20$ and $r\ge 5$, there exists a unimodal, non-SI Gorenstein $h$-vector of codimension $r$ and socle degree $e$.
\end{corollary}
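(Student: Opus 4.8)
The plan is to bootstrap from the codimension-five families constructed in Theorem \ref{r} by repeatedly applying Lemma \ref{increase codim}, in exactly the same way that Corollary \ref{e4} bootstraps from Theorem \ref{e}. Fix a socle degree $e\ge 20$. Theorem \ref{r} produces a unimodal, non-SI Gorenstein $h$-vector $H=(1,H_1,\dots,H_{e-1},1)$ of codimension $H_1=5$, realized by a cyclic inverse system module $\langle F\rangle\subset k[y_1,\dots,y_5]$. Since $r\ge 5$, applying Lemma \ref{increase codim} a total of $r-5$ times yields a Gorenstein $h$-vector of codimension $r$ and the same socle degree $e$, namely $H+(0,r-5,r-5,\dots,r-5,0)$.

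It then remains only to check that this operation preserves the two properties ``unimodal'' and ``non-SI''. Unimodality is immediate: adding the constant $r-5$ to every internal entry of $H$ does not disturb the monotonicity pattern, and the two boundary entries remain equal to $1$, which is strictly below their neighbors since $H_1+(r-5)\ge 5>1$. For the non-SI property, recall that in the proof of Theorem \ref{r} the obstruction is a failure of Macaulay's Theorem \ref{macaulay} for the first difference of the first half of $H$, occurring between degrees $d-1$ and $d$, where $e=2d$ or $e=2d+1$ with $d\ge 10$. Because $2\le d-1$ and $d\le e-2$, these are internal degrees, and adding a constant to all internal entries leaves the internal first differences — in particular the values of the first difference in degrees $d-1$ and $d$ — unchanged. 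Hence the identical Macaulay violation persists, so the new $h$-vector is again non-SI.

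The argument presents essentially no obstacle beyond this bookkeeping; the single point that must not be overlooked is that the Macaulay-violating degrees $d-1$ and $d$ lie strictly inside the range of indices shifted by Lemma \ref{increase codim}, i.e.\ away from the degree-$0$, degree-$1$, and degree-$e$ boundary, which is guaranteed by $d\ge 10$. This, together with the case $r=5$ already handled in Theorem \ref{r}, establishes the corollary.
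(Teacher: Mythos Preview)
Your proof is correct and follows essentially the same approach the paper intends: start from the codimension-five examples of Theorem \ref{r} (for each $e\ge 20$) and then apply Lemma \ref{increase codim} repeatedly, exactly as Corollary \ref{e4} does starting from Theorem \ref{e}. Your additional verification that the Macaulay violation in degrees $d-1$ and $d$ survives the shift (since both indices lie in $[2,\lfloor e/2\rfloor]$ for $d\ge 10$) makes explicit the check that the paper leaves implicit; the only cosmetic point is that the Gorenstein algebra of Theorem \ref{r} is built via trivial extension rather than given directly as $\langle F\rangle$, but of course every artinian Gorenstein algebra arises from some cyclic inverse system, so Lemma \ref{increase codim} applies.
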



Notice that while Theorem \ref{e} completely characterizes which socle degrees  admit a unimodal, non-SI Gorenstein $h$-vector, Theorem \ref{r}  closes all cases when it comes to the possible codimensions, \emph{except for $r=4$}. Here, finding new Gorenstein $h$-vectors seems extremely hard. So far, we do not even know any nonunimodal examples. For some recent results in codimension four, which heavily rely on the assumption that the characteristic of the base field be zero, see \cite{IS,MNZ,SS}.

We ask the following two-part question:

\begin{question}
\begin{itemize}
\item [(1)] For which integers $r$ and $e$ does there exist a unimodal, non-SI Gorenstein $h$-vector of codimension $r$ and socle degree $e$?
\item [(2)] Does the answer to  (1) depend on the \emph{characteristic of the base field}?
\end{itemize}
\end{question}

As far as (1) is concerned, the results of this paper imply a complete answer, among other cases, for $e\ge 20$ and any $r\neq 4$, and for $e=6$ and any $r\ge 10$. 

Case $r=4$ aside, it would be interesting to determine the least value of $e$ that gives a positive answer to part (1) when $r=5$, as well as the least $r$  when $e=6$. As for the latter problem, we note that while Theorem \ref{e} implies  $r\le 10$, standard but rather tedious methods show that $r$ is at least 6. However, determining whether codimension $r=9$ (and perhaps $r=8$ as well) allow the existence of unimodal, non-SI Gorenstein $h$-vectors of socle degree 6 might require a somehow different idea.

As for part (2), we believe the answer to be ``no,'' for every $r$ and $e$. (Note that all results in this paper are characteristic free.) In fact, we conjecture that the following important and much more general fact is true. We say that an $h$-vector $h$ is \emph{level in characteristic $p$} (here $p\ge 0$) if there exists a level algebra $k[x_1,\dots,x_r]/I$ having $h$-vector $h$ such that char$(k)=p.$

\begin{conjecture}\label{conj}
An $h$-vector is level in some characteristic if and only if it is level in \emph{every} characteristic.
\end{conjecture}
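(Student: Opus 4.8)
The conjecture as stated is an ``if and only if'' but only one direction has content: being level in every characteristic trivially implies being level in some characteristic, so the real claim is that level-ness in one characteristic forces level-ness in all. My plan is to attack this via a spreading-out / semicontinuity argument over $\operatorname{Spec}\mathbb{Z}$, exactly the standard technique for showing that numerical invariants of Hilbert functions are characteristic-independent, and to isolate precisely where it fails (which is why this is only a conjecture). First I would reduce to the generic situation: suppose $A = k[x_1,\dots,x_r]/I$ is level of type $t$ and socle degree $e$ with $h$-vector $h$, where $\operatorname{char}(k) = p$. Using Proposition \ref{compressed}-style genericity and the inverse systems dictionary from Section 2, I would replace $A$ by the algebra dual to an inverse system module $M = \langle F_1,\dots,F_t\rangle \subset S = k[y_1,\dots,y_r]$, generated by $t$ forms of degree $e$; the data of $M$ is a point of a product of projective spaces $\mathbb{P}(S_e)^t$ (or a Grassmannian $\mathrm{Gr}(t, S_e)$), which is a scheme over $\mathbb{Z}$.

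The key construction is to package the two conditions ``$h_A(i) = h_i$ for all $i$'' and ``$A$ is level'' as locally closed conditions on this parameter scheme $X = \mathrm{Gr}(t,S_e)$ over $\operatorname{Spec}\mathbb{Z}$. The Hilbert function condition: for a module generated by $t$ general forms of degree $e$, $\dim_k [M]_i = \dim_k (R_{e-i}\circ \langle F_1,\dots,F_t\rangle)$ is the rank of an explicit ``catalecticant-type'' matrix whose entries are integer polynomials in the coefficients of the $F_j$; requiring this rank to equal $h_i$ is the vanishing of certain minors together with the non-vanishing of others, hence a locally closed subscheme $X_h \subseteq X$ defined over $\mathbb{Z}$. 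The level condition: $A$ is level of socle degree $e$ iff $[A]_i$ has no socle elements for $i < e$, equivalently the multiplication maps $[A]_i \otimes R_1 \to [A]_{i+1}$ are injective for all $i < e$; dually this says $\langle F_1,\dots,F_t\rangle$ has no ``bad'' lower-degree relations, again a rank condition on integer matrices and hence locally closed. So ``$h$ is level of type $t$'' is witnessed by $X_h^{\mathrm{lev}} \neq \emptyset$ for some $t$, and this is a finite-type $\mathbb{Z}$-scheme; by Chevalley its image in $\operatorname{Spec}\mathbb{Z}$ is constructible, so it is either all of $\operatorname{Spec}\mathbb{Z}$, or misses finitely many closed points, or is a finite set of closed points. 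The desired statement is exactly that the second and third possibilities never occur — i.e., the generic fiber being nonempty forces all closed fibers nonempty, and vice versa.

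The step I expect to be the genuine obstacle — indeed the reason this is a conjecture and not a theorem — is controlling the \emph{special} fibers: a priori a locally closed subscheme of $X_\mathbb{Z}$ can have empty generic fiber but nonempty fiber over finitely many primes $p$ (a purely $p$-torsion phenomenon), and conversely nonempty generic fiber but empty fiber over some ``bad'' primes where the relevant minors drop rank unexpectedly. Ruling this out seems to require showing that the ``right'' stratum — the one where both the Hilbert function is $h$ and level-ness holds — is always geometrically irreducible and flat over $\mathbb{Z}$, or at least that its closure dominates $\operatorname{Spec}\mathbb{Z}$; equivalently, that the extremal behavior realizing a level $h$-vector can always be achieved by forms whose defining equations reduce well mod $p$. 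One concrete sub-approach is: take a level algebra realizing $h$ in characteristic $0$ (if one exists there), clear denominators to get forms over $\mathbb{Z}$, and argue that the bad-prime locus is empty because the rank conditions are cut out by minors that are, generically, equal to $\pm 1$ up to units — but there is no reason in general for the relevant determinants to be units, and small characteristic can destroy the general position of powers of linear forms (as the delicate characteristic-free arguments behind Propositions \ref{compressed} and the theorems above already illustrate). Absent a new idea to force this flatness, the argument shows only that the set of ``level characteristics'' of a given $h$ is either cofinite or finite in $\operatorname{Spec}\mathbb{Z} \cup \{0\}$, which is weaker than the conjecture; pinning down that it is always all characteristics is the crux, and I would expect progress to come either from a uniform inverse-systems construction insensitive to $p$, or from a transfer principle relating level-ness to a condition (like a suitable generic initial ideal or a Betti-number equality) already known to be characteristic-stable.
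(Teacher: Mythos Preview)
The statement you are addressing is labeled a \emph{conjecture} in the paper, and the paper offers no proof of it whatsoever; the surrounding discussion says only that known results are consistent with it and that ``a complete solution still seems out of reach.'' So there is no proof in the paper to compare your proposal against.

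Your proposal is not a proof either, and you correctly identify this yourself. The spreading-out argument you outline --- packaging the level-with-$h$-vector-$h$ condition as a locally closed subscheme of a Grassmannian over $\operatorname{Spec}\mathbb{Z}$ and invoking Chevalley --- is the natural first move, and it does yield the weaker conclusion you state: the set of characteristics in which a given $h$ is level is constructible, hence either cofinite or finite. But the genuine content of the conjecture is precisely that neither of the two ``bad'' constructible possibilities occurs, and nothing in your argument rules them out. The flatness (or dominance over $\operatorname{Spec}\mathbb{Z}$) you would need is exactly the missing ingredient, and there is no known mechanism to supply it in this generality; the paper's authors make the same point when they say it is hard even to guess at counterexamples. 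In short, your write-up is a lucid explanation of \emph{why} this is an open conjecture rather than a theorem, but it is not a proof, and the paper does not claim one either.
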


All results in the commutative algebra literature appear to go in the direction of Conjecture \ref{conj}. However, in general, a complete solution still seems out of reach, including in the Gorenstein case. It is difficult to even conjecture likely counterexamples using known techniques.


\section{Acknowledgements} This work was done while both authors were partially supported by  Simons Foundation grants  (\#309556 for Migliore, \#274577 for Zanello).


\end{document}